\newtheorem*{ThmA}{Theorem~A}
\newaliascnt{Lem}{Thm}
\newtheorem{Lem}[Lem]{Lemma}
\newaliascnt{Prop}{Thm}
\newtheorem{Prop}[Prop]{Proposition}
\newaliascnt{Cor}{Thm}
\newaliascnt{Con}{Thm}
\theoremstyle{definition}
\newaliascnt{Def}{Thm}
\newaliascnt{Ex}{Thm}
\numberwithin{equation}{section}
\renewcommand{\phi}{\varphi}
\newcommand{\Z}{\mathrm{Z}}
\newcommand{\ZZ}{\mathbb{Z}}
\newcommand{\FF}{\mathbb{F}}
\newcommand{\Aut}{\mathrm{Aut}}
\newcommand{\End}{\mathrm{End}}
\newcommand{\GL}{\mathrm{GL}}
\mathchardef\ordinarycolon\mathcode`\:  
\title{On a theorem of Ledermann and Neumann}
\author{Benjamin Sambale\footnote{Institut für Mathematik, Friedrich-Schiller-Universität Jena, 07737 Jena, Germany, 
\href{mailto:benjamin.sambale@uni-jena.de}{benjamin.sambale@uni-jena.de}}}
\date{\today}
\begin{document}
\frenchspacing
\maketitle
\begin{abstract}\noindent
We give a short and self-contained proof of a theorem of Ledermann and Neumann stating that there are only finitely many finite groups with a given number of automorphisms. We also discuss the history of related conjectures.
\end{abstract}

\textbf{Keywords:} finite groups, automorphisms\\
\textbf{AMS classification:} 20D45

\section{Introduction}
Obviously, every finite group $G$ has only finitely many automorphisms. In fact, 
\begin{equation}\label{easy}
|\Aut(G)|\le(|G|-1)!
\end{equation}
as every automorphism permutes the non-trivial elements of $G$ (an optimal bound will be given at the end of the paper). 

It is far less obvious, if conversely the order of $G$ is bounded by a function depending only on $|\Aut(G)|$. 
Ledermann and Neumann~\cite[Theorem~6.6]{LedermannNeumann} affirmatively answered this question in 1956 by constructing an explicit (but crude) bound. Unfortunately, their proof is rather long and complicated. In a second paper~\cite[Theorem~8.6]{LedermannNeumann2} the authors provided a local version by bounding the $p$-part $|G|_p$ in terms of $|\Aut(G)|_p$ where $p$ is a prime (this resolved a conjecture of Scott~\cite{ScottAut} and is now presented in the recent book~\cite[Chapter~3]{Passi}). Ledermann and Neumann's original theorem was rediscovered by Nagrebeckiĭ~\cite{Nagrebeckii} in 1970 and (presumably) independently by Iyer~\cite[Theorem~3.1]{Iyer} in 1979. The former proof is somewhat opaque and the latter implicitly relies on \cite{LedermannNeumann2} via the PhD thesis of Hyde~\cite{Hyde}. However, Nagrebeckiĭ~\cite[Theorem~4]{Nagrebeckii2} gave a more transparent second proof within a generalized framework dealing with infinite groups. It seems that his work was not widely recognized (the English translation is not mentioned on MathSciNet for instance).
The purpose of the present paper is to give a self-contained proof of the following version of the Ledermann--Neumann theorem based on some ideas from \cite{Nagrebeckii2}.

\begin{ThmA}
For every integer $n$ there exist only finitely many finite groups with at most $n$ automorphisms.
\end{ThmA}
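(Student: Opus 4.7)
The plan is to prove the contrapositive: if $|G|$ is very large compared to $n$, then $|\Aut(G)| > n$. I would proceed by three successive reductions, bounding first $|G/Z(G)|$, then $|G'|$, then $|Z(G)|$.

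The first two reductions are classical. Since $\Inn(G) \cong G/Z(G)$ embeds in $\Aut(G)$, we immediately obtain $|G/Z(G)| \le n$. Then Schur's classical theorem (which I would invoke as a black box) bounds $|G'|$ in terms of $|G/Z(G)|$, hence by some fixed function $s(n)$. In particular $|Z(G) \cap G'| \le s(n)$, so the image of $Z(G)$ in $G^{\mathrm{ab}} := G/G'$ has index at most $n$ and kernel of size at most $s(n)$.

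The hard step is bounding $Z := Z(G)$. I would analyze the subgroup $C \le \Aut(G)$ consisting of automorphisms inducing the identity on $G/Z$. A direct calculation shows each $\alpha \in C$ has the form $\alpha(g) = g\cdot f(gG')$ for a unique homomorphism $f\colon G^{\mathrm{ab}} \to Z$, and that $\alpha$ is bijective exactly when the induced self-map $z \mapsto z\, f(zG')$ of $Z$ is bijective. Specializing to homomorphisms $f$ that vanish on the image of $Z$ in $G^{\mathrm{ab}}$ yields an embedding
\[
\Hom((G/Z)^{\mathrm{ab}}, Z) \hookrightarrow C \le \Aut(G),
\]
so $|\Hom((G/Z)^{\mathrm{ab}}, Z)| \le n$. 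Applied to $Z$ itself and, prime by prime, to its characteristic Sylow subgroups (to control the rank at each prime), this pins down the order of $Z$ whenever $(G/Z)^{\mathrm{ab}}$ is nontrivial.

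The main obstacle is the case when $G/Z$ is perfect, i.e.\ $(G/Z)^{\mathrm{ab}} = 1$. Here the above construction is vacuous; on the other hand $G = G' Z$ forces $G^{\mathrm{ab}} = Z/(Z \cap G')$ to be a large quotient of $Z$. The strategy would then be to lift automorphisms of the abelian group $G^{\mathrm{ab}}$ back to automorphisms of $G$, controlling the kernel of $\Aut(G) \to \Aut(G^{\mathrm{ab}})$ by means of the bounded commutator subgroup from the Schur step. This lifting — essentially a Schur-type extension problem in reverse — is where I expect the bulk of the technical work to lie.
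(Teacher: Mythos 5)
Your first two reductions coincide with the paper's, and your analysis of the automorphisms inducing the identity on $G/\Z(G)$ is itself correct: each such map is $g\mapsto g\,f(gG')$ for a homomorphism $f\colon G^{\mathrm{ab}}\to\Z(G)$, and those $f$ vanishing on the image of $\Z(G)$ give an embedding $\Hom(G/\Z(G)G',\Z(G))\hookrightarrow\Aut(G)$. The gap is the conclusion you draw from this: a bound on $|\Hom((G/Z)^{\mathrm{ab}},Z)|$ does \emph{not} pin down $|Z|$, even when $(G/Z)^{\mathrm{ab}}\ne 1$ and even prime by prime. Take $G=S_3\times Z$ with $Z$ cyclic of order $5^k$, so that $\Z(G)=Z$: then $(G/Z)^{\mathrm{ab}}$ has order $2$, hence $\Hom((G/Z)^{\mathrm{ab}},Z)=1$, while $|Z|=5^k$ is unbounded. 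The Hom-group is blind to every primary component of $Z$ at primes not dividing $|(G/Z)^{\mathrm{ab}}|$, and even at a common prime $p$ it detects only the rank of the Sylow $p$-subgroup of $Z$, never its exponent (if $Z_p$ is cyclic then $|\Hom(A,Z_p)|\le|A|$ no matter how large $Z_p$ is). Note where the large automorphisms of this $G$ actually live: they are the power maps $z\mapsto z^r$ on $Z$, extended by the identity on $S_3$. These do lie in your group $C$, but their associated $f$ does \emph{not} vanish on the image of $\Z(G)$ --- the restriction you impose to get bijectivity for free discards exactly the automorphisms that see the size of the center.

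What is missing is a device producing power-type automorphisms of a \emph{general} $G$, and this is the heart of the paper's proof; it also dissolves the perfect case $G=G'\Z(G)$ that you explicitly leave open. The paper writes $G/G'=H/G'\times\langle gG'\rangle$ by the structure theorem, sets $N:=|G/\Z(G)|\cdot|G'|\cdot\prod_{p\mid|G|}p$ (bounded in terms of $n$ by your first two steps together with the fact, proved via a special case of Schur--Zassenhaus, that every prime divisor of $|G|$ is at most $n+1$), and verifies directly that $\alpha\colon hg^i\mapsto hg^{i(1+N)}$ ($h\in H$) is a well-defined automorphism of $G$: divisibility of $N$ by $|G'|$ gives well-definedness, $g^N\in\Z(G)$ gives multiplicativity, and coprimality of $1+N$ to $|G|$ gives surjectivity. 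Since $\alpha^n=\mathrm{id}$, this forces $g^{(1+N)^n}=g$, so $\exp(G/G')\le(1+N)^n-1$ and, with $|G'|$ bounded, $\exp(G)$ is bounded in terms of $n$. In particular, no lifting of arbitrary automorphisms of $G^{\mathrm{ab}}$ is ever needed --- the paper remarks explicitly that it is unclear how such lifting could be done, which is precisely the wall your perfect-case plan runs into. Once the exponent is bounded, bounding $|\Z(G)|$ really is a rank problem, and there ideas like yours work (the paper splits $G=UC\times D$ with $|UC|$ bounded and bounds the rank of the abelian factor $D$ using the automorphisms $x_1\mapsto x_1x_l$). Without an exponent bound, neither your non-perfect nor your perfect case can be completed.
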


Our proof of Theorem~A uses only first principles of elementary group theory, which are summarized in the next section.
In the final section we discuss some related conjectures. The reader interested in infinite groups can find several generalizations of Theorem~A in \cite{AlperinAut,Nagrebeckii3,Nagrebeckii4,RobinsonAut,RobinsonAut2,RobinsonAut3}.

\section{Preliminaries}

All groups considered in this paper are finite. 
Every element $g$ of a group $G$ induces an \emph{inner} automorphism $f_g$ of $G$ by sending $x$ to $gxg^{-1}$. The map $G\to\Aut(G)$, $g\mapsto f_g$ is a homomorphism whose kernel is the \emph{center} $\Z(G)=\{g\in G:gx=xg\,\forall x\in G\}$ of $G$. In particular,
\begin{equation}\label{inn}
|G/\Z(G)|\le|\Aut(G)|
\end{equation}
by the first isomorphism theorem.

For $x,y\in G$ we define the \emph{commutator} $[x,y]:=xyx^{-1}y^{-1}\in G$. A direct computation reveals
\begin{equation}\label{comm}
g[x,y]g^{-1}=[gxg^{-1},gyg^{-1}],\quad[x,y^2]=[x,y]y[x,y]y^{-1}=[x,y][yxy^{-1},y]
\end{equation}
for $g\in G$. The commutators of $G$ generate the \emph{commutator subgroup} $G'$ of $G$. By \eqref{comm}, $G'$ is normal in $G$ and $G/G'$ is abelian.

The \emph{exponent} $\exp(G)$ of $G$ is the smallest positive integer $e$ such that $g^e=1$ for all $g\in G$. Clearly, the exponent of every subgroup or quotient of $G$ divides $\exp(G)$. 
The smallest integer $d$ such that $G$ can be generated by $d$ elements is denoted by $d(G)$.

Now assume that $G$ is abelian. Then clearly
\begin{equation}\label{dG}
|G|\le\exp(G)^{d(G)}.
\end{equation}
By the main theorem of finite abelian groups there exists a decomposition 
\begin{equation}\label{decomp}
G=\langle x_1\rangle\times\ldots\times\langle x_k\rangle
\end{equation}
such that the order of $x_i$ is a prime power for $i=1,\ldots,k$. 
This yields a factorization into \emph{primary components} $G=G_{p_1}\times\ldots\times G_{p_n}$ where $p_1,\ldots,p_n$ are the prime divisors of $|G|$ and $G_{p_i}$ is the set of $p_i$-elements of $G$ for $i=1,\ldots,n$.
Suppose that $x_1$ in \eqref{decomp} is a $p$-element and $r\in\ZZ$ is a primitive root modulo $p$. Then the map $x_1\mapsto x_1^r$ defines an automorphism $\alpha$ of $\langle x_1\rangle$ whose order is divisible by $p-1$. Since $\alpha$ extends to $G$, we obtain
\begin{equation}\label{aut}
p-1\le|\Aut(G)|
\end{equation}
whenever $p$ divides $|G|$.



Finally we need a rather special case of the famous Schur--Zassenhaus theorem, which is at the same time a special case of Burnside's transfer theorem.

\begin{Prop}\label{SZ}
Let $p$ be a prime such that $|G|_p=|\Z(G)|_p$. Then $G=\Z(G)_p\times Q$ for some $Q\le G$.
\end{Prop}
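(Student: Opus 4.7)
My plan is to construct a transfer-style homomorphism $V\colon G\to P$, where $P:=\Z(G)_p$, whose restriction to $P$ is an automorphism; then $Q:=\ker V$ will serve as the desired complement. This is essentially the ``averaging over a transversal'' construction underlying Burnside's transfer theorem, but the centrality of $P$ will make it much simpler than in general.

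First I would observe that $P$ is a $p$-subgroup of $\Z(G)$ of order $|G|_p$, hence a Sylow $p$-subgroup of $G$; being central it is automatically normal, and it is therefore the unique Sylow $p$-subgroup of $G$. Set $m:=|G|/|P|$, which is coprime to $p$ by hypothesis. Now fix a transversal $t_1,\dots,t_m$ of $P$ in $G$; for each $g\in G$, a unique permutation $\sigma_g$ of $\{1,\dots,m\}$ and unique elements $p_i^{(g)}\in P$ are determined by $g\,t_i=t_{\sigma_g(i)}\,p_i^{(g)}$. Define $V(g):=\prod_{i=1}^m p_i^{(g)}\in P$; the product is unambiguous because $P$ is abelian.

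The central step is verifying that $V$ is a homomorphism, and this is where the genuine work lies. Expanding $(gh)t_i=g\bigl(t_{\sigma_h(i)}\,p_i^{(h)}\bigr)$ and using that $p_i^{(h)}\in P\le\Z(G)$ slides past everything on the left, I obtain $\sigma_{gh}=\sigma_g\circ\sigma_h$ and $p_i^{(gh)}=p_{\sigma_h(i)}^{(g)}\,p_i^{(h)}$; reindexing the first factor by $j=\sigma_h(i)$ then yields $V(gh)=V(g)V(h)$. The restriction $V|_P$ is far easier: a central $x\in P$ satisfies $x\,t_i=t_i\,x$, so $\sigma_x=\mathrm{id}$ and $p_i^{(x)}=x$ for every $i$, whence $V(x)=x^m$; and since $\gcd(m,|P|)=1$, the $m$-th power map is an automorphism of $P$.

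To conclude, set $Q:=\ker V$. Then $P\cap Q=\ker(V|_P)=1$, and $V$ is surjective because $V|_P$ already is, so $|Q|=|G|/|P|=m$ and hence $PQ=G$. Centrality of $P$ yields $[P,Q]=1$, so $G=P\times Q$. The only real obstacle is the bookkeeping needed to verify that $V$ is a homomorphism; centrality of $P$ is precisely what makes the commutations in the transfer calculation go through automatically, and no deeper input is needed.
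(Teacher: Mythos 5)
Your proof is correct, but it is worth pointing out that it takes a genuinely different (and more self-contained) route than the paper: the paper does not prove this Proposition at all, it simply cites \cite[Theorem~3.3.1 or Theorem~7.2.1]{Kurzweil} (Schur--Zassenhaus, respectively Burnside's transfer theorem) as an external black box. Your argument is exactly the proof of Burnside's transfer theorem specialized to a central Sylow subgroup: you build the pre-transfer $V(g)=\prod_{i}p_i^{(g)}$ into $P=\Z(G)_p$, verify the homomorphism property, evaluate $V|_P$ as the $m$-th power map (bijective since $\gcd(m,|P|)=1$), and take $Q=\ker V$; the order count $|Q|=|G|/|P|$ together with normality of both factors and $P\cap Q=1$ then gives $G=P\times Q$. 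All steps check out. What your version buys is that the paper's advertised self-containedness becomes literal: this citation is the only external ingredient in the whole proof of Theorem~A, and your page of transfer bookkeeping eliminates it. What the paper's choice buys is brevity, delegating the standard computation to a textbook. One correction of emphasis: centrality of $P$ is \emph{not} what makes the homomorphism verification work --- that computation goes through for any normal subgroup with abelian target (it is the standard pre-transfer argument, needing only that $P$ is abelian so the product splits after reindexing). Where centrality is genuinely indispensable is in the evaluation $V(x)=x^m$ for $x\in P$ (in the general Burnside theorem this step requires a fusion argument, which your hypothesis lets you skip) and in concluding $[P,Q]=1$ at the end.
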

\begin{proof}
See \cite[Theorem~3.3.1 or Theorem~7.2.1]{Kurzweil}.
\end{proof}

\section{Proof of Theorem~A}

In the following let $G$ be a finite group and $n:=|\Aut(G)|$. We prove Theorem~A by bounding $|G|$ in terms of $n$. 
This is done in a series of lemmas.

\begin{Lem}[Schur~\cite{Schur}]\label{schur}
$|G'|\le n^{2n^3}$.
\end{Lem}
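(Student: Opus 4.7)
My plan is to follow Schur's classical argument, exploiting that cosets modulo $\Z(G)$ control commutator structure. Set $m := |G/\Z(G)|$; by \eqref{inn} we have $m \le n$, so it suffices to bound $|G'|$ as a function of $m$. The strategy is to control both the number of generators and the exponent of the abelian subgroup $Z := G' \cap \Z(G)$, and then use that $G'/Z$ embeds into $G/\Z(G)$, so $[G':Z] \le m$.

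The starting observation is that $[xz_1, yz_2] = [x,y]$ whenever $z_1, z_2 \in \Z(G)$ (immediate from centrality), so the set $K$ of all commutators in $G$ satisfies $|K| \le m^2$. By the first identity in \eqref{comm}, $K$ is closed under $G$-conjugation, and $[x,y]^{-1} = [y,x] \in K$, hence $G' = \langle K\rangle$ as a subgroup. A standard Schreier rewriting (choose a transversal $T$ of $Z$ in $G'$ and write each $t\cdot k$ with $t \in T$, $k \in K$ as $t' \cdot z$ with $t' \in T$, $z \in Z$) then expresses $Z$ as generated by at most $|T|\cdot|K| \le m \cdot m^2 = m^3$ elements.

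Next I would bound $\exp(Z)$ by $m$, so that \eqref{dG} applied to the abelian group $Z$ yields $|Z| \le m^{m^3}$. The cleanest route is the transfer homomorphism $\tau\colon G \to \Z(G)$, which exists because $[G:\Z(G)] = m$ is finite — it belongs to the same circle of ideas as the Burnside/Schur--Zassenhaus statement \autoref{SZ}. One has $\tau(z) = z^m$ for $z \in \Z(G)$, while $\tau$ kills $G'$ because $\Z(G)$ is abelian; hence $z^m = \tau(z) = 1$ for every $z \in Z$.

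Putting everything together gives $|G'| = [G':Z]\cdot|Z| \le m \cdot m^{m^3} = m^{m^3+1} \le n^{n^3+1} \le n^{2n^3}$. I expect the exponent bound in the previous paragraph to be the main obstacle: if one insists on avoiding the transfer, one has to argue directly from \eqref{comm}, for instance starting from the telescoping identity $[x, y^m] = \prod_{i=0}^{m-1} y^i [x,y] y^{-i} = 1$ (since $y^m \in \Z(G)$), which immediately yields $[x,y]^m = 1$ for central commutators $[x,y]$ but then requires further bootstrapping to reach arbitrary products of commutators lying in $\Z(G)$. The transfer sidesteps this entirely and keeps the proof short.
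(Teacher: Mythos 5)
Your argument is correct, but it takes a genuinely different route from the paper's proof (which is Rosenlicht's). You follow Schur's classical argument: split $G'$ over $Z:=G'\cap\Z(G)$, note $[G':Z]\le m$ via the second isomorphism theorem, bound $d(Z)\le m^3$ by Schreier's lemma applied to the generating set $K$ of commutators, bound $\exp(Z)\le m$ by the transfer homomorphism into $\Z(G)$, and then \eqref{dG} gives $|Z|\le m^{m^3}$, hence $|G'|\le m^{m^3+1}\le n^{2n^3}$ --- in fact a marginally sharper exponent than the paper obtains. The paper instead avoids both Schreier and the transfer: it shows directly, using only the identities \eqref{comm}, that every element of $G'$ is a product of at most $m^3$ commutators (if a shortest representation had length greater than $m^3$, some commutator $\gamma=[x,y]$ would occur more than $m$ times, could be moved to the front by conjugation, and then $\gamma^{m+1}=[x,y^2][yxy^{-1},y]^{m-1}$, which follows from \eqref{comm} and $\gamma^m\in\Z(G)$, would shorten the product), giving $|G'|\le(m^2)^{m^3}$. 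What each approach buys: yours is the conceptually standard one with a slightly better bound; the paper's is completely self-contained, which is the stated purpose of the article. On that score, two caveats about your write-up: first, neither Schreier's lemma nor the transfer is among the paper's preliminaries, and your appeal to \autoref{SZ} does not actually furnish the transfer --- that proposition is quoted as a black box and neither constructs $\tau$ nor proves $\tau(z)=z^m$ for central $z$, so to stay self-contained you would have to develop the transfer (or prove directly that $g\mapsto g^m$ is a homomorphism $G\to\Z(G)$ when $[G:\Z(G)]=m$). Second, the ``further bootstrapping'' you flag in your fallback telescoping argument is precisely what the paper's identity $[x,y^2]=[x,y][yxy^{-1},y]$ accomplishes inside products of commutators, so the elementary repair you gesture at is not a dead end: it is exactly Rosenlicht's proof.
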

\begin{proof}[Proof \textup{(}\textnormal{Rosenlicht~\cite{Rosenlicht}}\textup{)}.]
Let $g_1,\ldots,g_m\in G$ be representatives for the cosets of $G/\Z(G)$. Then $m=|G/\Z(G)|\le n$ by \eqref{inn}. 
Arbitrary elements $g,h\in G$ can be written as $g=g_iz$ and $h=g_jw$ with $z,w\in\Z(G)$. It follows that $[g,h]=[g_i,g_j]$. Hence, the set of commutators
\[\Gamma:=\bigl\{[g,h]:g,h\in G\bigr\}=\bigl\{[g_i,g_j]:1\le i,j\le m\bigr\}\]
has at most $m^2$ elements. It suffices therefore to show that every element $g\in G'$ is a product of at most $m^3$ commutators. 
Let $g=\gamma_1\ldots \gamma_s$ such that $\gamma_1,\ldots,\gamma_s\in\Gamma$ and $s$ is as small as possible. By way of contradiction suppose that $s>m^3$. Then some commutator $\gamma=[x,y]$ appears more than $m$ times among the $\gamma_i$. Since $\gamma_i\gamma_{i+1}=\gamma_{i+1}\delta$ where $\delta:=\gamma_{i+1}^{-1}\gamma_i\gamma_{i+1}\in\Gamma$ by \eqref{comm}, we may assume that $\gamma=\gamma_1=\ldots=\gamma_{m+1}$. Since $\gamma^m=\gamma^{|G/\Z(G)|}\in\Z(G)$, we have
\[\gamma^{m+1}=\gamma\gamma^m=\gamma y\gamma^my^{-1}=\gamma(y\gamma y^{-1})^m=\gamma y\gamma y^{-1}\cdot (y\gamma y^{-1})^{m-1}=[x,y^2][yxy^{-1},y]^{m-1}\]
according to \eqref{comm}. But now $g=\gamma^{m+1}\gamma_{m+2}\ldots\gamma_s$ is a product of $s-1$ commutators. Contradiction.
\end{proof}

\autoref{schur} shifts the focus to the abelian group $G/G'$. It is however not clear if and how automorphisms of $G/G'$ lift to $G$.

\begin{Lem}\label{primes}
Every prime divisor $p$ of $|G|$ is at most $n+1$.
\end{Lem}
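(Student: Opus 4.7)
The plan is to split on whether or not $p$ divides $|G/\Z(G)|$. The first case is handled directly by \eqref{inn}, and the second reduces, via \autoref{SZ}, to an abelian central $p$-group on which \eqref{aut} can be applied.

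First I would suppose $p \mid |G/\Z(G)|$. Then Lagrange's theorem combined with \eqref{inn} gives
\[ p \le |G/\Z(G)| \le n, \]
so certainly $p \le n+1$.

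Otherwise $p \nmid |G/\Z(G)|$, and since $p \mid |G| = |\Z(G)|\cdot|G/\Z(G)|$, I obtain $|G|_p = |\Z(G)|_p > 1$. Then \autoref{SZ} supplies a subgroup $Q \le G$ with $G = \Z(G)_p \times Q$, where $\Z(G)_p$ is a nontrivial abelian $p$-group. Any automorphism of $\Z(G)_p$ extends to $G$ by acting trivially on $Q$, which gives an injection $\Aut(\Z(G)_p) \hookrightarrow \Aut(G)$. Applying \eqref{aut} to the abelian group $\Z(G)_p$ yields $p-1 \le |\Aut(\Z(G)_p)| \le n$, so again $p \le n+1$.

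There is no substantive obstacle here: the one observation that matters is that the hypothesis $|G|_p = |\Z(G)|_p$ of \autoref{SZ} is exactly what the failure of the first case provides, so both subcases collapse to a single line.
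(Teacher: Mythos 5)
Your proof is correct and follows essentially the same route as the paper: the identical case split on whether $p$ divides $|G/\Z(G)|$, with \eqref{inn} handling the first case and \autoref{SZ} plus the extension of automorphisms of $\Z(G)_p$ (via \eqref{aut}) handling the second. The only difference is that you spell out the details the paper leaves implicit, such as the automorphism acting trivially on $Q$ and the injectivity of the resulting map $\Aut(\Z(G)_p)\hookrightarrow\Aut(G)$.
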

\begin{proof}
If $|G/\Z(G)|_p\ne 1$, then $p\le n$ by \eqref{inn}. Otherwise, $|\Z(G)|_p=|G|_p$ and $G=\Z(G)_p\times Q$ by \autoref{SZ}. 
Since every automorphism of $\Z(G)_p$ extends to $G$, we obtain $p-1\le n$ by \eqref{aut}.
\end{proof}

A careful analysis of the proof shows that $p^2\mid|G|$ implies $p\mid n$. This observation of Herstein--Adney~\cite{Herstein} is however not needed below.

\begin{Lem}\label{exp}
The exponent $\exp(G)$ is bounded in terms of $n$. 
\end{Lem}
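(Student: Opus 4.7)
The plan is to reduce the bound on $\exp(G)$ to one on $\exp(\Z(G))$ and then to treat one prime $p$ at a time, using automorphisms of $G$ built by twisting the identity with a homomorphism $G/G'\to \Z(G)$.

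For the reduction, \eqref{inn} gives $|G/\Z(G)|\le n$, so $g^{|G/\Z(G)|}\in\Z(G)$ for every $g\in G$, whence $\exp(G)$ divides $|G/\Z(G)|\cdot\exp(\Z(G))\le n\cdot\exp(\Z(G))$. By \autoref{primes} only primes $p\le n+1$ divide $|G|$ and there are at most $n+1$ of them, so it suffices to bound $\exp(\Z(G)_p)$ for each such $p$.

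Fix such a prime $p$, set $p^k:=\exp(\Z(G)_p)$, pick $z\in\Z(G)$ of order $p^k$, and let $p^j$ denote the order of $\bar z:=zG'$ in the abelianization $G/G'$, so $0\le j\le k$. For every $c\in\Hom(G/G',\langle z\rangle)$ the map $\phi_c\colon G\to G$, $g\mapsto g\cdot c(g)$, is a homomorphism because $c(g)\in\Z(G)$ commutes with everything; since $c(g)\in\langle z\rangle$ any kernel element lies in $\langle z\rangle$, and writing $c(z)=z^t$ one checks that $\phi_c$ is bijective iff $t\not\equiv -1\pmod p$, equivalently $c(\bar z)\notin z^{-1}\langle z^p\rangle$. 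The composite $\Hom(G/G',\langle z\rangle)\to \langle z\rangle/\langle z^p\rangle\cong\FF_p$ is a group homomorphism whose image is either $\{0\}$ or all of $\FF_p$, so in either case at least a fraction $(p-1)/p$ of the $c$'s yield automorphisms. The key abelian-group input is $|\Hom(G/G',\langle z\rangle)|\ge p^j$: the primary decomposition of $(G/G')_p$ must contain a cyclic summand of order at least $p^j$ (the order of $\bar z$), and homomorphisms from that summand alone already account for $p^j$ possibilities. This gives $p^{j-1}(p-1)\le n$, hence $p^j\le 2n$.

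It remains to bound the gap $p^{k-j}$. If $j=k$ there is nothing left to do, and if $j<k$ then $z^{p^j}$ is a nontrivial element of $G'$ of order $p^{k-j}$, so \autoref{schur} forces $p^{k-j}\le|G'|\le n^{2n^3}$. Altogether $p^k\le 2n^{2n^3+1}$, and multiplying over the at most $n+1$ relevant primes gives the desired bound on $\exp(G)$. The hard step will be controlling this gap $k-j$: the twisting construction only bounds the order $p^j$ of $\bar z$ in the abelianization, so \autoref{schur} is essential in order to push the ``hidden'' part $z^{p^j}$ into the already-bounded commutator subgroup.
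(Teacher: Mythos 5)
Your proof is correct, but it takes a genuinely different route from the paper's. The paper reduces to bounding $\exp(G/G')$ (via \autoref{schur}), splits off a cyclic direct factor $G/G'=H/G'\times\langle gG'\rangle$, and builds a \emph{single} automorphism $\alpha\colon hg^i\mapsto hg^{i(1+N)}$ with $N=|G/\Z(G)|\cdot|G'|\cdot\prod_{p\mid|G|}p$; since the order of $\alpha$ divides $n$, this gives $\exp(G/G')\le(1+N)^n-1$. You instead reduce to bounding $\exp(\Z(G))$, using $\exp(G)\mid|G/\Z(G)|\cdot\exp(\Z(G))$ and \eqref{inn}, and then, one prime at a time, manufacture a whole \emph{family} of central automorphisms $\phi_c\colon g\mapsto g\,c(g)$ with $c\in\Hom(G/G',\langle z\rangle)$ and count them against $n=|\Aut(G)|$; this yields the prime-local bound $p^j\le 2n$ on the order of $zG'$ in $G/G'$, at the cost of a second application of \autoref{schur} to absorb $z^{p^j}\in G'$. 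In other words, the paper exploits Lagrange (the order of one automorphism divides $n$), while you exploit cardinality (many distinct automorphisms must fit inside $\Aut(G)$); amusingly, the paper's $\alpha$ is a twist of exactly your type, since $hg^i\mapsto g^{iN}$ is a homomorphism $G\to\Z(G)$ vanishing on $G'$, so the underlying mechanism coincides. What each approach buys: your counting gives a bound on the abelianized order that is linear in $n$ rather than exponential, but both final bounds are dominated by the $n^{2n^3}$ from \autoref{schur}, and both routes use \autoref{primes} in the same way to control the set of relevant primes. Two small points you leave implicit and should state: distinct $c$ give distinct maps $\phi_c$ (clear, since $c(g)=g^{-1}\phi_c(g)$), which is what converts your count of good $c$'s into a lower bound for $n$; and only the ``if'' direction of your bijectivity criterion is actually needed.
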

\begin{proof}
By \autoref{schur} it suffices to show that $\exp(G/G')$ is bounded in terms of $n$.
By \eqref{decomp} we may write $G/G'=H/G'\times\langle gG'\rangle$ with $g\in G$ and $H\unlhd G$. 
Then $G=H\langle g\rangle$ and $H\cap\langle g\rangle\le G'$. Note that
\[N:=|G/\Z(G)|\cdot |G'|\cdot\prod_{p\,\mid\, |G|}p\le n\cdot n^{2n^3}\cdot (n+1)!\]
by \eqref{aut}, \autoref{schur} and \autoref{primes}.
Let $h_1,h_2\in H$ and $i,j\in\ZZ$ such that $h_1g^i=h_2g^j$. Then $h_2^{-1}h_1=g^{j-i}\in H\cap\langle g\rangle\le G'$. Since $|G'|$ divides $N$ we conclude that $h_2^{-1}h_1=(g^{j-i})^{1+N}$. Therefore the map
\[\alpha:G\to G,\quad hg^i\mapsto hg^{i(1+N)}\quad(h\in H,\,i\in\ZZ)\]
is well-defined. Since $g^N\in\langle g^{|G/\Z(G)|}\rangle\subseteq\Z(G)$, we obtain
\begin{align*}
\alpha(h_1g^ih_2g^j)&=\alpha(h_1(g^ih_2g^{-i})g^{i+j})=h_1(g^ih_2g^{-i})g^{(i+j)(1+N)}=h_1g^ih_2g^{-i+i(1+N)}g^{j(1+N)}\\
&=h_1g^{i+iN}h_2g^{j(1+N)}=\alpha(h_1g^i)\alpha(h_2g^j)
\end{align*}
for all $h_1,h_2\in H$ and $i,j\in\ZZ$. Hence, $\alpha$ is a homomorphism. Every prime divisor of $|\langle g\rangle|$ divides $|G|$ and is therefore coprime to $1+N$. Consequently, $\langle g^{1+N}\rangle=\langle g\rangle$ and $\alpha$ is surjective. Now $\alpha\in\Aut(G)$, since $G$ is finite. 
In particular, $g=\alpha^n(g)=g^{(1+N)^n}$. Since $\langle gG'\rangle$ was an arbitrary direct factor of $G/G'$, it follows that
\[\exp(G/G')\le (1+N)^n-1.\qedhere\]
\end{proof}

\begin{Lem}\label{dirpel}
Let $A$ be an abelian group and $a\in A$ of prime order $p$. Then there exists a decomposition $A=B\times C$ such that $B$ is cyclic and $a\in B$.
\end{Lem}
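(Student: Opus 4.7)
The plan is to start from the decomposition \eqref{decomp} and iteratively modify the generators so that $a$ eventually lies in a single cyclic factor.

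First I would reduce to the case that $A$ is a $p$-group: by the primary decomposition recalled after \eqref{decomp}, $A=A_p\times A_{p'}$ with $a\in A_p$, so it suffices to find a cyclic direct summand of $A_p$ containing $a$; the complementary factor can then absorb $A_{p'}$.

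Assuming $A$ is a finite abelian $p$-group, apply \eqref{decomp} to obtain $A=\langle x_1\rangle\times\cdots\times\langle x_k\rangle$ with $|x_i|=p^{e_i}$ and $e_1\ge\cdots\ge e_k$. Expanding $a=x_1^{m_1}\cdots x_k^{m_k}$, the relation $a^p=1$ forces $m_i=c_i\,p^{e_i-1}$ for some $c_i\in\{0,1,\ldots,p-1\}$, not all zero. I would induct on the number $N$ of indices $i$ with $c_i\ne 0$. If $N=1$, say $c_j\ne 0$, then $a\in\langle x_j\rangle$ and I take $B:=\langle x_j\rangle$ and $C:=\prod_{i\ne j}\langle x_i\rangle$.

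For the inductive step ($N\ge 2$), choose distinct indices $i_0,j_0$ with $c_{i_0}c_{j_0}\ne 0$ and $e_{i_0}\ge e_{j_0}$. Pick $d\in\ZZ$ with $c_{j_0}d\equiv 1\pmod p$ and set $s:=p^{e_{i_0}-e_{j_0}}\,c_{i_0}\,d$. Replace $x_{j_0}$ by $x_{j_0}':=x_{j_0}\,x_{i_0}^{s}$ while keeping the other generators. A short computation (using $e_{i_0}\ge e_{j_0}$) shows $|x_{j_0}'|=p^{e_{j_0}}$, so the product of the factor orders is still $|A|$ and the resulting product is direct. Rewriting $a$ in the new basis, the exponent of $x_{i_0}$ becomes $c_{i_0}\,p^{e_{i_0}-1}-s\,c_{j_0}\,p^{e_{j_0}-1}$, which vanishes modulo $p^{e_{i_0}}$ by the choice of $s$; hence $N$ drops by one and the induction terminates.

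The main obstacle is the basis-change step: verifying both that $|x_{j_0}'|=p^{e_{j_0}}$ (so the replacement really yields a direct product decomposition) and that the explicit $s$ kills the $x_{i_0}$-coefficient of $a$. Both reduce to elementary $p$-adic congruences in $\ZZ/p^{e_{i_0}}\ZZ$ exploiting $e_{i_0}\ge e_{j_0}$.
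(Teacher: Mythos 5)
Your proposal is correct and rests on the same core computation as the paper's proof: multiplying the generator of smaller order, $x_{j_0}$, by the power $x_{i_0}^{s}$, $s=p^{e_{i_0}-e_{j_0}}c_{i_0}d$, of a generator of larger order so as to absorb the $x_{i_0}$-coefficient of $a$ while preserving a direct decomposition. The only difference is bookkeeping: the paper chooses a direct summand $B$ with $a\in B$ of minimal order and performs all eliminations in one stroke via $b:=x_1^{\beta_1p^{\alpha_1-\alpha_n}}\cdots x_n^{\beta_n}$, concluding $B=\langle b\rangle$ by minimality, whereas you iterate pairwise eliminations by induction on the number of nonzero coefficients --- the same trick, unrolled.
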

\begin{proof}
By \eqref{decomp} we may assume that $A=A_p$. 
Let $A=B\times C$ such that $a\in B$ and $|B|$ is as small as possible ($B=A$ may do). Let 
\[B=\langle x_1\rangle\times\ldots\times\langle x_n\rangle\] 
such that $|\langle x_i\rangle|=p^{\alpha_i}$ and $\alpha_1\ge\ldots\ge \alpha_n$. The choice of $B$ implies that $a=x_1^{\beta_1p^{\alpha_1-1}}\ldots x_n^{\beta_np^{\alpha_n-1}}$ where $\beta_i\not\equiv 0\pmod{p}$ for $i=1,\ldots,n$.
We define
\[b:=x_1^{\beta_1p^{\alpha_1-\alpha_n}}x_2^{\beta_2p^{\alpha_2-\alpha_n}}\ldots x_n^{\beta_n}.\]
Then $a=b^{p^{\alpha_n-1}}\in\langle b\rangle$ and $B=\langle x_1\rangle\times\ldots\times\langle x_{n-1}\rangle\times\langle b\rangle$. Now the minimality of $B$ yields $B=\langle b\rangle$ as desired.
\end{proof}

\begin{Lem}\label{lemabel2}
Let $B\le A$ be abelian groups. Then there exists a decomposition $A=C\times D$ such that $B\le C$ and $d(C)\le|B|$.
\end{Lem}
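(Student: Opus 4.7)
The plan is to induct on $|B|$. The base case $B=1$ is trivial: take $C=1$ and $D=A$.

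For the inductive step, assume $B\ne 1$ and choose $a\in B$ of prime order $p$. \autoref{dirpel} provides a decomposition $A=B_1\times C_1$ with $B_1$ cyclic and $a\in B_1$. Let $\pi\colon A\to C_1$ be the projection onto the second factor, and set $B':=\pi(B)$. Since $\ker(\pi)=B_1$ meets $B$ in a subgroup containing $a$ of order $p$, we get $|B'|\le|B|/p<|B|$. The inductive hypothesis applied to $B'\le C_1$ then yields a decomposition $C_1=C'\times D$ with $B'\le C'$ and $d(C')\le|B'|$.

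Setting $C:=B_1\times C'$ immediately gives $A=B_1\times C_1=B_1\times C'\times D=C\times D$. To verify $B\le C$, observe that any $b\in B$ decomposes uniquely as $b=b_1c$ with $b_1\in B_1$ and $c\in C_1$; but $c=\pi(b)\in B'\le C'$, whence $b\in B_1\times C'=C$. Finally,
\[d(C)\le d(B_1)+d(C')\le 1+|B|/p\le|B|,\]
the last inequality holding since $|B|\ge p\ge 2$.

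The key design decision is to induct on $|B|$ rather than on $|A|$: \autoref{dirpel} peels off one cyclic direct summand absorbing a prime-order element of $B$, so $|B|$ shrinks by a factor of at least $p$ at each step while $d(C)$ grows by only one generator. This bookkeeping is exactly tight for the bound $d(C)\le|B|$. I do not anticipate any serious obstacle; the containment $B\le C$ is forced by the inductive choice of $C'\supseteq\pi(B)$, and the rest is formal.
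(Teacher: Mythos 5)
Your proof is correct, and it reaches the same bound with the same two ingredients---induction on $|B|$ and \autoref{dirpel}---but it applies them in the opposite order from the paper. The paper's inductive step stays inside $B$: it picks a subgroup $B_0\le B$ of prime index $p$, applies induction to $B_0\le A$ to obtain $A=C_0\times D_0$ with $B_0\le C_0$, and only afterwards invokes \autoref{dirpel}, applied to the $D_0$-component $d$ of some $b\in B\setminus B_0$ (after the computation $d^p=b^pc^{-p}\in C_0\cap D_0=1$, which shows $d$ has order dividing $p$), to split off a cyclic summand $D_1\le D_0$ and set $C=C_0\times D_1$. You invoke \autoref{dirpel} first, on an element $a\in B$ of prime order, to split $A=B_1\times C_1$, and then induct on the projection $\pi(B)\le C_1$, which is a quotient of $B$ rather than a subgroup of it. The bookkeeping is identical in both proofs ($d(C)\le 1+|B|/p\le|B|$), so neither route is shorter; the trade-off is that your version replaces the paper's order computation for $d$ by the kernel estimate $|\pi(B)|=|B|/|B\cap B_1|\le|B|/p$, which is what makes your induction decrease. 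A small aesthetic advantage of your arrangement is that \autoref{dirpel} is always applied to a genuine element of prime order, whereas in the paper's argument the element $d$ can be trivial---a degenerate case the paper passes over harmlessly (take $D_1=1$), but which strictly speaking falls outside the hypothesis of \autoref{dirpel} as stated.
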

\begin{proof}
We argue by induction on $|B|$. If $|B|=1$, then we take $C=1$ and $D=A$. Now assume that $|B|>1$ and pick a subgroup $B_0\le B$ of prime index $p$. By induction there exists a decomposition $A=C_0\times D_0$ such that $B_0\le C_0$ and $d(C_0)\le|B_0|$.
Let $b\in B\setminus B_0$ and write $b=cd$ with $c\in C_0$ and $d\in D_0$. Then 
\[d^p=b^pc^{-p}\in B_0C_0\cap D_0\le C_0\cap D_0=1.\] By \autoref{dirpel} there exists a decomposition $D_0=D_1\times D_2$ such that $D_1$ is cyclic and $d\in D_1$. Now we define $C:=C_0\times D_1$. Then $B=B_0\langle b\rangle\le C$, $A=C_0\times D_0=C_0\times D_1\times D_2=C\times D_2$ and 
\[d(C)\le d(C_0)+1\le|B_0|+1\le|B|\]
as desired.
\end{proof}

\begin{proof}[Proof of Theorem~A]
By \eqref{aut} it suffices to bound $|\Z(G)|$ in terms of $n$. Let $g_1,\ldots,g_m\in G$ be representatives for the cosets of $G/\Z(G)G'$. Let $U:=\langle g_1,\ldots,g_m\rangle G'$. Then 
\[d(U/G')\le m=|G:\Z(G)G'|\le|G:\Z(G)|\le n.\]
By \autoref{schur} and \eqref{dG},
\[|U|=|U/G'||G'|\le\exp(U/G')^{d(U/G')}n^{2n^3}\le\exp(G)^nn^{2n^3}.\]
Hence by \autoref{exp}, $|U|$ is bounded by a function on $n$.
By \autoref{lemabel2} we have $\Z(G)=C\times D$ such that $U\cap\Z(G)\le C$ and $d(C)\le|U\cap\Z(G)|\le|U|$. 
Now also $|C|$ is bounded and it remains to prove that $|D|$ can be bounded in terms of $n$.
Let $d=uc\in UC\cap D$ with $u\in U$ and $c\in C$. Then $u=dc^{-1}\in U\cap\Z(G)\le C$ and it follows that $d=dc^{-1}c\in D\cap C=1$. This shows
\[G=U\Z(G)=U(C\times D)=UC\times D.\]
Since every automorphism of $D$ extends to $G$, we may assume that $G=D$ is abelian. By \autoref{primes} we may assume that $G=G_p$ is a $p$-group, say 
\[G=\langle x_1\rangle\times\ldots\times\langle x_k\rangle\]
with $|\langle x_1\rangle|\ge\ldots\ge|\langle x_k\rangle|$. It is easily checked that the map
\begin{align*}
x_1\mapsto x_1x_l,&&x_i\mapsto x_i\qquad(2\le i\le k)
\end{align*}
defines an automorphism of $G$ whenever $2\le l\le k$. Hence, $k\le n$ and $|G|$ is bounded in terms of $n$ by \autoref{exp}.
\end{proof}

\section{The reverse bound}
As promised at the very beginning, we now give an optimal bound on $|\Aut(G)|$ in terms of $|G|$. Recall that a group $G$ is called \emph{boolean} if $\exp(G)\le 2$. In this case $G$ is abelian, since $gh=(gh)^{-1}=h^{-1}g^{-1}=hg$ for all $g,h\in G$. The following improves \eqref{easy}.

\begin{Prop}
For every finite group $G$ we have $d(G)\le\log_2|G|$ and
\[|\Aut(G)|\le\prod_{k=0}^{d(G)-1}\bigl(|G|-2^k\bigr)\]
with equality if and only if $|G|$ is a prime or $G$ is boolean.
\end{Prop}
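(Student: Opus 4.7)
The plan is to fix a minimal generating set $g_1, \ldots, g_d$ of $G$ with $d = d(G)$ and parametrise $\Aut(G)$ by the images of these generators. Minimality forces $g_{i+1} \notin \langle g_1, \ldots, g_i\rangle$ for each $0 \le i < d$, since otherwise one could drop $g_{i+1}$ and still generate $G$ with $d-1$ elements. By Lagrange's theorem, the strictly ascending chain $1 < \langle g_1\rangle < \langle g_1, g_2\rangle < \ldots < G$ at least doubles in order at each step, so $|\langle g_1, \ldots, g_i\rangle| \ge 2^i$ and in particular $d(G) \le \log_2|G|$.

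Each $\alpha \in \Aut(G)$ is determined by the tuple $(\alpha(g_1), \ldots, \alpha(g_d))$, and applying $\alpha$ to the above chain condition gives $\alpha(g_{i+1}) \notin \langle \alpha(g_1), \ldots, \alpha(g_i)\rangle$. Counting the possible images of the generators one after the other leaves at most $|G| - |\langle \alpha(g_1), \ldots, \alpha(g_i)\rangle| \le |G| - 2^i$ choices for $\alpha(g_{i+1})$, and so
\[
|\Aut(G)| \le \prod_{k=0}^{d-1}\bigl(|G| - 2^k\bigr).
\]

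For the equality statement, I would verify the easy direction by direct computation: a group of prime order $p$ has $d = 1$ and $|\Aut(G)| = p - 1$, while a boolean group $G \cong \FF_2^d$ has $|\Aut(G)| = |\GL_d(\FF_2)| = \prod_{k=0}^{d-1}(|G| - 2^k)$. For the converse, if $d = 1$ then $G$ is cyclic and $|\Aut(G)| = \phi(|G|) = |G| - 1$ forces $|G|$ to be prime. If $d \ge 2$, equality forces each factor in the product to be saturated for every admissible prefix; in particular, for each non-trivial $h \in G$, the number of admissible choices of $\alpha(g_2)$ given $\alpha(g_1) = h$ must equal $|G| - 2$, which requires $|\langle h\rangle| = 2$. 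Hence every non-trivial element of $G$ has order $2$, and $G$ is boolean. The main subtlety is exactly this last step: equality has to be used not just for one fixed image choice but uniformly over all admissible first components $\alpha(g_1)$, which is what promotes a statement about a single cyclic subgroup to a statement about every element of $G$.
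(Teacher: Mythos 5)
Your proposal is correct and follows essentially the same route as the paper's proof: parametrising automorphisms by the images of a minimal generating set, bounding the choices via the Lagrange doubling of the chain $1<\langle g_1\rangle<\langle g_1,g_2\rangle<\ldots<G$, and extracting the equality case from saturation of the individual factors. The only cosmetic differences are that you settle $d=1$ via $\phi(n)=n-1$ if and only if $n$ is prime (the paper instead argues that equality forces all non-trivial elements to share a common prime order) and that you leave the trivial case $d(G)=0$ implicit, where $G=1$ is boolean and the empty product equals $1=|\Aut(G)|$.
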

\begin{proof}
If $G=1$, then $d(G)=0$ and equality holds by interpreting the empty product as $1$ (note that the trivial group is boolean). Now let $G\ne 1$ with a minimal generating set $g_1,\ldots,g_d\in G$ where $d=d(G)$. For $\alpha\in\Aut(G)$, also $\alpha(g_1),\ldots,\alpha(g_d)$ is a (minimal) generating set and $\alpha$ is uniquely determined by those images. Since $\alpha(g_1)\ne 1$, there are at most $|G|-1$ choices for $\alpha(g_1)$. Since $\alpha(g_2)\notin\langle\alpha(g_1)\rangle$, there are at most $|G\setminus\langle\alpha(g_1)\rangle|\le|G|-2$ possibilities for $\alpha(g_2)$ and so on. This proves $d(G)\le\log_2|G|$ and the inequality on $|\Aut(G)|$.

If equality holds, then for every $g\ne 1$ there exists an automorphism mapping $g_1$ to $g$. In particular, all non-trivial elements of $G$ have the same order, which necessarily must be a prime $p$ (if not, consider a power of $g$). If additionally $d=1$, then $|G|=|\langle g_1\rangle|=p$. On the other hand, if $d\ge 2$, then there are $|G|-2=|G\setminus\langle\alpha(g_1)\rangle|$ choices for $\alpha(g_2)$. Hence $p=|\langle\alpha(g_1)\rangle|=2$ and $G$ is boolean.

Conversely, every group of prime order $p$ has $p-1$ automorphisms by \eqref{aut}. Moreover, every boolean group $G$ is an $\FF_2$-vector space and $\Aut(G)\cong\GL(d,2)$ where $d=d(G)$. Counting matrices with linearly independent rows yields the well-known formula \[|\GL(d,2)|=(2^d-1)(2^d-2)\ldots(2^d-2^{d-1}).\]
Thus, we have shown equality.
\end{proof}

The proof above actually shows slightly more: If $|G|=p_1\ldots p_n$ with primes $p_1\le\ldots\le p_n$, then $d(G)\le n$ and
\[|\Aut(G)|\le\prod_{k=0}^{d(G)-1}\bigl(|G|-p_1\ldots p_k\bigr).\]

\section{Some related conjectures}

A complete classification of all finite groups with less than $48$ automorphisms was given by MacHale and Sheehy~\cite{MacHaleSheehy} (see also \cite{OEIS3}). They noticed that $\phi(|G|)\le|\Aut(G)|$ holds in these small cases where $\phi$ is Euler's totient function. In fact, this inequality was conjectured in general by Deaconescu~\cite{Deaconescu} who also conjectured that equality holds if and only if $G$ is cyclic (it is Problem 15.43 in the Kourovka Notebook~\cite{Kourovka}). If true, this would yield a bound on $|G|$ as well (e.g., $|G|\le|\Aut(G)|^{1+\epsilon}$ provided $|G|$ is large enough with respect to $\epsilon>0$). However, Bray and Wilson~\cite{BrayWilson,BrayWilson2} constructed solvable and nonsolvable counterexamples.

Similarly, the long-standing Problem~12.77 in \cite{Kourovka} proposed that $|G|$ divides $|\Aut(G)|$ for every nonabelian $p$-group $G$. This was disproved recently by González-Sánchez and Jaikin-Zapirain~\cite{SchenkmanCon} using pro-$p$ group techniques. In fact, $|\Aut(G)|/|G|$ can be arbitrarily small.

Yet another conjecture, this time from \cite{MacHaleSheehy}, reads $|G|\le|\End(G)|$ where $\End(G)$ is the set of endomorphisms of $G$. However, the triple cover $G=3.A_7$ of the alternating group of degree $7$ is a counterexample. Since $A_7$ is a simple group, $G$ has only three normal subgroups: $1$, $\Z(G)$ and $G$. Here, $\Z(G)$ cannot occur as a kernel of an endomorphism, because as a perfect group $G$ does not contain subgroups of index $3$. Hence, every nontrivial endomorphism is an automorphism. Moreover, it is known that $\Aut(G)$ acts faithfully on $G/\Z(G)\cong A_7$ (this holds for any quasisimple group). Since $\Aut(A_7)$ is isomorphic to the symmetric group $S_7$, we finally conclude that
\[|\End(G)|=1+|\Aut(G)|\le1+|\Aut(G/\Z(G))|=1+|S_7|=1+7!<\frac{3}{2}7!=|G|.\]

\section*{Acknowledgment}
The author is supported by the German Research Foundation (\mbox{SA 2864/1-2} and \mbox{SA 2864/3-1}).

\end{document}